\newcommand{\bB}{{\mathbb{B}}}
\newcommand{\bC}{{\mathbb{C}}}
\newcommand{\bZ}{{\mathbb{Z}}}
  \newcommand{\A}{{\mathcal{A}}}
  \newcommand{\B}{{\mathcal{B}}}
  \newcommand{\C}{{\mathcal{C}}}
  \newcommand{\F}{{\mathcal{F}}}
\renewcommand{\H}{{\mathcal{H}}}
  \newcommand{\K}{{\mathcal{K}}}  
\renewcommand{\L}{{\mathcal{L}}}
  \newcommand{\M}{{\mathcal{M}}}
  \newcommand{\N}{{\mathcal{N}}}
\renewcommand{\O}{{\mathcal{O}}}
\renewcommand{\S}{{\mathcal{S}}}
  \newcommand{\T}{{\mathcal{T}}}
  \newcommand{\U}{{\mathcal{U}}}
\newcommand{\rC}{\mathrm{C}}
\newcommand{\rW}{\mathrm{W}}
\renewcommand{\phi}{\varphi}
\newcommand{\upchi}{{\raise.35ex\hbox{$\chi$}}}
\newcommand{\ol}{\overline}
\newcommand{\Ad}{\operatorname{Ad}}
\newcommand{\mycomment}[1]{}
\newcommand{\id}{\operatorname{id}}
\newcommand{\supp}{\operatorname{supp}}
\newcommand{\ev}{\operatorname{ev}}
\newcommand{\acts}{\curvearrowright}
\newtheorem{theorem}{Theorem}[section]
\newtheorem*{theorem*}{Theorem}
\newtheorem{lemma}[theorem]{Lemma}
\newtheorem*{lemma*}{Lemma}
\newtheorem{proposition}[theorem]{Proposition}
\newtheorem{definition}[theorem]{Definition}
\newtheorem{theoremx}{Theorem}
\date{\today}
\author{Adam Dor-On}
\address{Department of Mathematics, University of Haifa, Mount Carmel, Haifa 3103301, Israel}
\email{adoron.math@gmail.com\vspace{-1ex}}
\author{Ian Thompson}
\address{Department of Mathematical Sciences, University of Copenhagen, Universitetsparken 5, 2100 Copenhagen, Denmark}
\email{ian@math.ku.dk\vspace{-1ex}}
\subjclass[2020]{46K50, 47L55, 46L55, 46L05.}
\keywords{Hao-Ng isomorphism, C*-envelope, noncommutative dynamics, reduced crossed product,  unique extension, Maharam lifting, Cuntz-Pimsner algebra.}
\thanks{A. Dor-On was partially
supported by an NSF-BSF grant no. 2350543 / 2023695 (respectively), a Horizon Marie-Curie SE project no.
101086394 and a DFG Middle-Eastern collaboration project no. 529300231. I. Thompson was partially supported by NSERC Postdoctoral Fellowship no. 587750.}
\title[The Hao-Ng isomorphism theorem for reduced crossed products]{The Hao-Ng isomorphism theorem \\ for reduced crossed products}
\begin{document}
\begin{abstract}
    We prove the Hao-Ng isomorphism for reduced crossed products by locally compact Hausdorff groups. More precisely, for a non-degenerate $\rC^*$-correspondence $X$ and a generalized gauge action $G \acts X$ by a locally compact Hausdorff group $G$, we prove the commutation $\O_{X\rtimes_rG}\cong\O_X\rtimes_rG$ of the reduced crossed product with the Cuntz-Pimsner $\rC^*$-algebra construction. This is done by proving that the reduced crossed product of an operator algebra commutes with the $\rC^*$-envelope, which relies on refined $\rW^*$-dynamical covers of $\rC^*$-dynamical systems, unitary implementation of $\rW^*$-dynamical systems, and an operator-valued extension of Maharam's lifting theorem.
\end{abstract}
\maketitle

\section{Introduction}\label{s:intro}

Alongside the emergence of category theory as a unifying language for abstracting various constructions across Mathematics, determining the precise relationship between two given functorial operations has become crucial for advancing far-reaching structural theories. This is especially true in operator $K$-theory and, in particular, Kasparov's $KK$-theory and Higson's $E$-theory. In the setting of $KK$-theory, Schochet's K\"unneth formula \cite{schochet1982topological} is a prime example of this where the relationship between $K$-theory and tensor products is determined, and has led to tremendous impact in the classification program for $\rC^*$-algebras \cite{rosenberg1987kunneth, baum1994classifying}.

In this paper, we resolve the reduced Hao-Ng isomorphism problem in complete generality, establishing the commutation of the reduced crossed product functor with the Cuntz-Pimsner $\rC^*$-algebra functor. We briefly describe the problem at hand, where more details can be found in Section \ref{s:prelim}. Given a (non-degenerate) $\rC^*$-correspondence $X$ over a $\rC^*$-algebra $\B$, the smallest gauge-equivariant $\rC^*$-algebra $\O_X$ generated by $X$ and $\B$ is called the Cuntz-Pimsner algebra of $X$. The Cuntz-Pimsner construction can be largely understood at the level of the underlying $\rC^*$-correspondence \cites{katsura2007ideal, katsura2004c}, yet it encompasses a wide variety of naturally occurring $\rC^*$-algebras, including all crossed products by $\bZ$ and topological graph algebras \cites{katsura2004c, muhly2005topological, pimsner1996class}. Given a locally compact Hausdorff group $G$, a generalized gauge action on $X$ is an action $\alpha:G\acts\O_X$ that preserves the copies of $X$ and $\B$ inside $\O_X$, and allows us to form the reduced crossed product $\rC^*$-correspondence $X\rtimes_{r,\alpha} G$ in a natural way. The reduced Hao-Ng isomorphism then asks whether the Cuntz-Pimsner algebra $\O_{X\rtimes_{r,\alpha}G}$ of $X\rtimes_{r,\alpha}G$ is isomorphic to the corresponding reduced crossed product $\O_X \rtimes_{r,\alpha} G$ of the Cuntz-Pimsner algebra of $X$.

The Hao-Ng isomorphism problem was first considered around 18 years ago by Hao and Ng \cite{hao2008crossed} where they established the validity of the isomorphism described above for actions by amenable locally compact Hausdorff groups. In the decade since then, the Hao-Ng isomorphism problems for both full and reduced crossed products were shown to be intimately tied to various functoriality and Takai-type duality for crossed products \cites{abadie2010takai, kaliszewski2013functoriality, kaliszewski2015coactions} and, in the work of B\'edos, Kaliszewski, Quigg, and Robertson \cite{bedos2015new}, the Hao-Ng isomorphism for reduced crossed products was established for actions of discrete exact groups. Further applications can be found in other works, including those of Schafhauser on AF-embeddability \cite{schafhauser2015cuntz}, and of Deaconu on group actions on graph $\rC^*$-algebras \cites{deaconu2012group, deaconu2018group}.

In recent years, significant progress has been made on the Hao-Ng isomorphism problems by establishing a bridgehead between the structure theories of $\rC^*$-algebras and non-self-adjoint operator algebra theory \cites{katsoulis2019crossed, katsoulis2021non}. In these papers, Katsoulis and Ramsey show that to prove the Hao-Ng isomorphism for the reduced crossed product, it is sufficient to prove that the reduced \emph{non-self-adjoint} operator algebra crossed product commutes with the $\rC^*$-envelope. This strategy was highly successful, and has led to the resolution of the Hao-Ng isomorphism problem for reduced crossed products by discrete groups \cites{katsoulis2017c}, as well as under the assumption that the underlying $\rC^*$-correspondence is hyperrigid \cite{katsoulis2021non}.

\begin{theoremx}\label{t:a}
    Let $X$ be a C*-correspondence, $G$ be a locally compact Hausdorff group, and $\alpha:G\acts X$ be a generalized gauge action. Then, $\O_X\rtimes_{\alpha, r} G\cong \O_{X\rtimes_{\alpha, r} G}$.
\end{theoremx}

Over the last several decades the $\rC^*$-envelope has proven itself as a robust tool in operator theory and operator algebras \cite{arveson1969subalgebras, dritschel2005boundary, arveson2008noncommutative, davidson2015choquet}. It was first shown to exist in Hamana's work on injective envelopes for operator systems \cite{hamana1979injective}, leading to far-reaching consequences in group theory \cites{kalantar2017boundaries, breuillard2017c}, noncommutative convexity theory \cites{davidson2017d, davidson2022strongly, davidson2019noncommutative, kennedy2021noncommutative}, and approximation theory \cites{bilich2024arveson, kennedy2015essential, clouatre2024rigidity}.

The idea of establishing commutation results for the $\rC^*$-envelope was also used by the first-named author together with Geffen and Eilers in \cite{dor2020classification}, where connections between the seemingly distinct classification theories for $\rC^*$-algebras and non-self-adjoint operator algebras were established. These types of links between the self-adjoint and non-self-adjoint theories have led to significant advances in the structure theory of $\rC^*$-algebras, including gauge-coaction co-universality theorems for a variety of $\rC^*$-algebras \cites{brix2026normal, dor2020tensor, dor2022c, kakariadis2023couniversality, sehnem2022c}, as well as various generalizations of Hao-Ng isomorphism theorems in the context of product systems \cites{dor2020tensor, dor2022c, katsoulis2020product, kakariadis2025fock,li2022zappa}. In this paper, we establish the commutation of the reduced crossed product  functor with the $\rC^*$-envelope in full generality.

\begin{theoremx}\label{t:b}
Let $\A$ be an operator algebra with a contractive approximate identity, and let $G$ be a locally compact Hausdorff group. If $\alpha:G\acts\A$ is an action, then $\rC^*_e(\A)\rtimes_{\alpha,r}G\cong\rC^*_e(\A\rtimes_{\alpha,r}G)$ via the canonical map.
\end{theoremx}

The proof of Theorem \ref{t:b} is inspired by \cite{katsoulis2021non}, where it was established that the reduced crossed product functor commutes with the $\rC^*$-envelope for \emph{hyperrigid} operator algebras (see \cite[Definition 1.1]{arveson2011noncommutative}). One problem in extending the proof strategy of \cite{katsoulis2021non} beyond the hyperrigid setting is rooted in the fact that the unique extension property from noncommutative boundary theory \cite[Definition 2.1]{arveson2008noncommutative} is not preserved under direct integrals. This was first witnessed in a recent counterexample to Arveson's hyperrigidity conjecture by Bilich and the first-named author \cite{bilich2024arveson}.

To overcome this obstruction, our idea is prompted by another recent paper by Clou\^atre with the second-named author \cite{clouatre2024rigidity}. Therein, a so-called \emph{tight} variation of the unique extension property is considered as a substitute for the usual one. However, the unique tight extension property does not appear to directly fit the purpose of proving Theorem \ref{t:b}. This is due to the general lack of injectivity of the von Neumann algebra generated by the range of an arbitrary $*$-representation of the $\rC^*$-envelope. This led us to consider an intermediate type of unique extension property in Proposition \ref{p:tmax-action-pres}, where the range of appropriate extensions is contained in $\bB(\H)\ol{\otimes} L^{\infty}(G)$. For this idea to go through beyond the separable setting, we require an operator-valued version of a Maharam-type lifting theorem. A lifting theorem of Ionescu-Tulcea \cite{tulcea1967existence} guarantees that there is an appropriate lift for the Haar measure space of a group $G$ with respect to left Haar measure on $G$. By applying Hamana's work on Fubini tensor products \cite[Section 3]{hamana1982tensor}, in Proposition \ref{p:maharam} we are able to extend such lifting theorems to the operator-valued setting.

However, even after establishing Proposition \ref{p:tmax-action-pres}, several difficulties remain in implementing the proof strategy from \cite{katsoulis2021non} and a novel approach is necessary. We address this in Proposition \ref{p:uep-unitary-implement}, where we construct a faithful $*$-representation $\pi$ of the $\rC^*$-envelope that simultaneously has the unique extension property with respect to $\A$ and so that the action of $G$ is spatially implemented by a strongly continuous unitary representation of $G$. This allows us to then deduce that the left regular integrated form of $\pi$ has the unique extension property, leading to a complete proof of the commutation of the reduced crossed product with the $\rC^*$-envelope in Theorem \ref{t:red-cross-iso}. However, to show the existence of the $*$-representation $\pi$ of the $\rC^*$-envelope in Proposition \ref{p:uep-unitary-implement}, we require extensions to two classical results on locally compact group dynamics on von Neumann algebras.

The first result comes from Ikunishi’s construction of the universal $\rW^*$-dynamical system that contains a given $\rC^*$-dynamical system \cite{ikunishi1988w}. The $*$-represen\-tation naturally afforded to us by noncommutative boundary theory is the largest summand with the unique extension property of the universal $*$-representation of the $\rC^*$-envelope, which is usually not unitarily equivalent to the universal $*$-representation of the $\rC^*$-envelope. In Theorem \ref{t:w*-dyn} we show that by intersecting a prescribed faithful invariant central summand with Ikunishi’s summand, one can still obtain a faithful $*$-representation of the $\rC^*$-dynamical system implementing an embedding into a point-weak* continuous $\rW^*$-dynamical system. When applied to the $\rC^*$-envelope and the universal $*$-representation with the unique extension property, this provides us with a $\rW^*$-dynamical system cover of the $\rC^*$-dynamical system via an underlying $*$-representation that has the unique extension property.

The second result concerns unitary implementation of covariant representations of a $\rW^*$-dynamical system. In \cite{henle1970spatial}, Henle proved that actions on von Neumann algebras can be unitarily implemented under a separability hypothesis on the predual, provided that the commutant is properly infinite. The von Neumann algebras generated by the images of representations arising from refining Ikunishi's construction need not have separable predual, so in Theorem \ref{t:henle} we prove an extension of Henle's theorem to the non-separable setting. Upon applying our extension of Henle's theorem to the $\rW^*$-dynamical system that arises out of Theorem \ref{t:w*-dyn}, we are able to complete the proof of Proposition \ref{p:uep-unitary-implement}.

This paper has four sections, including this introduction. In Section \ref{s:prelim}, we gather preliminary facts on the $\rC^*$-envelope, the unique extension property, $\rC^*$-corres\-pondences, and Cuntz-Pimsner algebras. In Section \ref{s:nc-dyn} we prove necessary results from noncommutative dynamics and the operator-valued measurable lifting theorem. More precisely, in Subsection \ref{ss:w*-dyn} we provide our refinement of Ikunishi's universal $\rW^*$-dynamical system cover of a $\rC^*$-dynamical system, in Subsection \ref{ss:unitary-implement} we extend Henle's theorem on unitary implementation of actions on von Neumann algebras, and, in Subsection \ref{ss:maharam} we prove an operator-valued Maharam-type lifting theorem. In Section \ref{s:cross-prod-env}, we combine our machinery to prove Theorem \ref{t:red-cross-iso}, which shows that the reduced crossed product of operator algebras commutes with the $\rC^*$-envelope. This then leads to the resolution of the Hao-Ng isomorphism theorem for reduced crossed products in Theorem \ref{t:hao-ng}.

\section{Preliminaries}\label{s:prelim}

\subsection{The C*-envelope and the unique extension property} \label{ss:C*-env}

We recall some of the necessary machinery from operator algebra theory, which may be found in \cites{arveson2008noncommutative, paulsen2002completely}. For potentially non-unital operator algebras we refer to \cite{meyer2001adjoining}, \cite[Subsection 4.3]{blecher2004operator} and \cite[Subsection 2.2]{dor2018full}. In these works it is explained how various aspects of Arveson's noncommutative boundary theory carries over to the possibly non-unital setting with minor modifications.

An \emph{operator algebra} is a norm-closed subalgebra of bounded operators on a Hilbert space $\A\subseteq \bB(\H)$. A \emph{representation} of $\A$ is a completely contractive homomorphism $\rho:\A\rightarrow\bB(\K)$. The \emph{$\rC^*$-envelope} of $\A$ is a pair $(\rC^*_e(\A), \varepsilon)$ consisting of a $\rC^*$-algebra $\rC^*_e(\A)$ together with a completely isometric representation $\varepsilon:\A\rightarrow\rC^*_e(\A)$ such that $\rC^*(\varepsilon(\A)) = \rC^*_e(\A)$ has the following co-universal property: whenever $\iota:\A\rightarrow\B$ is a completely isometric representation with $\B = \rC^*(\iota(\A))$, then there is a surjective $*$-homomorphism $\pi:\B\rightarrow\rC^*_e(\A)$ such that $\pi\circ\iota = \varepsilon$. Since this property uniquely determines the $\rC^*$-envelope up to a $*$-isomorphism that preserves an image of $\A$, we frequently refer to the $\rC^*$-algebra $\rC^*_e(\A)$ as \emph{the} $\rC^*$-envelope of $\A$.

An important method for constructing the $\rC^*$-envelope of an operator algebra $\A\subseteq \bB(\H)$ is through its universal property with respect to representations with the unique extension property. This method is analogous to classical Choquet theory, and is different from Hamana's original method for proving the existence of the C*-envelope. For this, suppose that $\B:=\rC^*(\A)$ is the $\rC^*$-algebra generated by $\A$ in some completely isometric embedding of $\A$ in $\bB(\H)$. A $*$-representation $\pi:\B\rightarrow\bB(\K)$ is said to have the \emph{unique extension property} with respect to $\A$ if there is a unique completely positive completely contractive map $\psi:\B\rightarrow\bB(\K)$ such that $\psi|_\A = \pi|_\A$. Now, if $\pi:\B\rightarrow \bB(\K)$ has the unique extension property with respect to $\A$ and $\pi|_\A$ is completely isometric, then it is easy to show that $(\pi(\B), \pi|_\A)$ coincides with the $\rC^*$-envelope of $\A$ (see \cite[Subsection 2.2]{dor2018full}). The existence of such $*$-representations in the unital setting was first exhibited by Dritschel and McCullough \cite{dritschel2005boundary}, but straightforward arguments as in \cite[Subsection 2.2]{dor2018full} show that this carries over to the possibly non-unital setting.

\subsection{C*-correspondences, Cuntz-Pimsner algebras, and their dynamics}\label{ss:correspondence}

Here, we record some of the details on $\rC^*$-correspondences that we shall need in this paper, and refer the reader to \cite[Section 4.6]{brown2008textrm} or \cite{lance1995hilbert} for more. Let $\B$ be a $\rC^*$-algebra. For a right Hilbert $\B$-module $X$, we denote by $\L(X)$ the $\rC^*$-algebra of adjointable operators and by $\K(X)$ its subalgebra given by the closure of the ``rank-one" operators. More precisely, for $x,y\in X$, the rank-one operators are denoted by $\theta_{x,y}$ and defined by $\theta_{x,y}(z) := x\langle y, z\rangle$.

Recall that a \emph{$\rC^*$-correspondence} over $\B$ is a right Hilbert $\B$-module $X$ together with a $*$-homomorphism $\varphi_X: \B\rightarrow\L(X)$, which we consider as a left action of $\B$ on $X$. We will assume throughout this paper that $\rC^*$-correspondences are \emph{non-degenerate}, where we say that $X$ is non-degenerate if the $\varphi_X(\B)X$ is dense in $X$. 

Given two $\rC^*$-correspondences $X$ and $Y$, a new $\rC^*$-correspondence $X\otimes Y$ may be formed in the following way: First, on the quotient of the algebraic tensor product $X\odot Y$ by \[xb\otimes y - x\otimes \varphi_Y(b)y, \quad x\in X, y\in Y, b\in \B,\]
we may define a $\B$-valued sesquilinear form and a two-sided $\B$-action by setting \[\langle x\otimes y, v\otimes w\rangle = \langle y, \varphi_Y(\langle x, v\rangle)w\rangle,\] 
\[ (x\otimes y)b = x\otimes(yb), \quad \text{and} \quad \varphi_{X\otimes Y}(b)(x\otimes y) = (\varphi_X(b)x)\otimes y\]
for every $v,x\in X, w,y\in Y$ and $b\in\B$. Then, the completion with respect to the $\B$-valued inner product yields a $\rC^*$-correspondence $X\otimes Y$.

For a $\rC^*$-correspondence $X$, one may then construct the Fock correspondence
\[\F_X = \B \oplus \bigoplus_{n=1}^{\infty} X^{\otimes n}\]
over $\B$. The Fock correspondence gives rise to the so-called left-creation operators $T_x\in\L(\F_X)$ for $x\in X,$ defined by
\[T_x(b) = xb \quad \text{and} \quad T_x(x_1\otimes\ldots\otimes x_n) = x\otimes x_1\otimes\ldots\otimes x_n\]
for $b\in\B$ and $x, x_1,\ldots, x_n\in X$. The $\rC^*$-algebra $\T_X$ generated by the left action of $\B$ on $\F_X$ and the left-creation operators on $\F_X$ is called the \emph{Toeplitz algebra} of the $\rC^*$-correspondence $X$. The norm-closed subalgebra $\T_X^+$ generated by the left action of $\B$ on $\F_X$ and the left-creation operators is called the \emph{tensor algebra} of $X$.

Let $\B, \C$ be $\rC^*$-algebras and $X$ be a $\rC^*$-correspondence over $\B$. A \emph{representation} of $X$ is a pair $(\rho, t)$ consisting of a non-degenerate $*$-homomorphism $\rho:\B\rightarrow \C$ and a completely contractive linear map $t:X\rightarrow \C$ with the property that
\[\rho(a)t(x)\rho(b) = t(\varphi_X(a)xb), \quad a,b\in \B, x\in X.\]
If, in addition, we have $t(x)^*t(y) = \rho(\langle x, y\rangle)$ for each $x,y\in X$, then we say that $(\rho, t)$ is \emph{rigged} (this is sometimes called \emph{isometric}, for instance in \cite{muhly1998tensor}). In \cites{pimsner1996class, muhly1998tensor}, it is shown that the Toeplitz algebra $\T_X$ is the universal $\rC^*$-algebra generated by the rigged representations of $X$.

Given a $\rC^*$-correspondence $X$ over $\B$, we define \emph{Katsura's ideal} $J_X \lhd \B$ by 
\[J_X:= \{ \ b\in \B \ | \ \varphi_X(b) \in \K(X) \ \ \text{and} \ \ bc = 0 \ \ \text{for all } c\in \ker \varphi_X \ \}. \]
Now, given a rigged representation $(\rho, t)$ of $X$, one may define a $*$-homomorphism $\psi_t:\K(X)\rightarrow \C$ by specifying it on rank-one operators by $\psi_t(\theta_{x,y}) = t(x)t(y)^*$ for each $x,y\in X$. A rigged representation $(\rho, t)$ is then said to be \emph{covariant} if $\psi_t(\varphi_X(b)) = \rho(b)$ for each $b\in J_X$. The universal $\rC^*$-algebra generated by rigged covariant representations of $X$ is the \emph{Cuntz-Pimsner algebra} $\O_X$, introduced by Pimsner \cite{pimsner1996class} and refined by Katsura \cite{katsura2004c}. Thus, if $(\hat{\rho},\hat{t})$ is a universal rigged representation (so that the $\rC^*$-algebra generated by the images of $\hat{\rho}$ and $\hat{t}$ is a $*$-isomorphic copy of $\T_X$), we see that $\O_X$ is the quotient of $\T_X$ by the ideal generated by the differences $\psi_{\hat{t}}(\varphi_X(b)) - \hat{\rho}(b)$ for $b\in J_X$. In fact, by \cite[Theorem 3.7]{katsoulis2006tensor}, we know that $\O_X$ can be identified with the $\rC^*$-envelope $\rC^*_e(\T_X^+)$. In particular, the canonical quotient map from $\T_X$ to $\O_X$ is completely isometric on $\T_X^+$. So, the tensor algebra can be thought of as an operator subalgebra in both $\T_X$ and $\O_X$.

Given a locally compact Hausdorff group $G$, a \emph{generalized gauge action} $\alpha:G\acts X$ is an action $\alpha$ of $G$ on $\T_X^+$ such that $\alpha_g(\B) = \B$ and $\alpha_g(X) = X$ for every $g\in G$. Equivalently, a generalized gauge action is the restriction of an action $\alpha$ of $G$ on $\T_X$, or $\O_X$, to $\T_X^+$ such that $\alpha_g(\B) = \B$ and $\alpha_g(X) = X$ for each $g\in G$ with respect to the canonical copies of $\B$ and $X$ in each of these generated $\rC^*$-algebras. By the above description of the Cuntz-Pimsner algebra, we see that $\alpha$ automatically induces an action on $\O_X$, which we will continue to denote by $\alpha$. For a generalized gauge action, one may construct a $\rC^*$-correspondence $X\rtimes_{\alpha, r} G$ over $\B\rtimes_{\alpha, r}G$ (see \cite[Section 2]{bedos2015new} for an equivalent definition) by taking the closures of $\rC_c(G, X)$ and $\rC_c(G, \B)$ considered canonically inside $\T_X\rtimes_{\alpha, r}G$ (which is itself a completion of $\rC_c(G,\T_X)$), where the bimodule actions are induced by multiplication and the $(\B\rtimes_{\alpha, r}G)$-valued inner product is given by $\langle f, g\rangle = f^*g$ for $f,g\in\rC_c(G, X)$.

\section{Noncommutative dynamics and operator-valued Maharam lifting} \label{s:nc-dyn}

\subsection{$\rW^*$-system covers of a $\rC^*$-dynamical system.} \label{ss:w*-dyn} In this subsection, we show how one can naturally construct $\rW^*$-dynamical systems that contain a given $\rC^*$-dynamical system. We accomplish this by refining Ikunishi's construction of the universal $\rW^*$-dynamical system associated to a $\rC^*$-dynamical system \cite{ikunishi1988w}. 

Let $G$ be a locally compact Hausdorff group. Consider a $\rC^*$-algebra $\B$ and a point-norm continuous action $\alpha:G\curvearrowright\B$ by completely isometric automorphisms. Then, the triple $(\B, G, \alpha)$ will be referred to as a $\rC^*$-dynamical system. If $\M \subset \bB(\H)$ is a von Neumann algebra, and $\alpha : G \acts \M$ is a point-weak* continuous group action, the triple $(\M,G,\alpha)$ will be referred to as a $\rW^*$-dynamical system.

Let $(\B, G, \alpha)$ be a $\rC^*$-dynamical system. From Ikunishi's construction \cite[Theorem 1]{ikunishi1988w}, we construct a $\rW^*$-dynamical system $(\B_\alpha'', G, \ol{\alpha})$ together with an equivariant faithful $*$-representation $\rho:\B\rightarrow \B_\alpha''$ satisfying $\B_\alpha'' = \rho(\B)''$, and with the property that $(\B_\alpha'', G, \ol{\alpha})$  admits a normal surjective $*$-homomorphism onto any $\rW^*$-dynamical system that is generated by the image of an equivariant $*$-homomorphism of $(\B, G, \alpha)$. To construct the $\rW^*$-dynamical system $(\B_\alpha'', G, \ol{\alpha})$, start with the \emph{continuous part} $\B_c^*$ of the dual, which is defined by
\[
    \B_c^* = \{\varphi\in\B^* ~:~ g\mapsto \varphi\circ\alpha_g \text{ is norm continuous}\}.
\]
Let $\alpha^{**}_g: \B^{**}\rightarrow\B^{**}$ be the double-dual $*$-automorphism defined for every $g\in G$, which is automatically weak*-continuous. In what follows we sometimes abuse notation and identify continuous linear functionals $\varphi \in \B^*$ with their weak*-continuous extension $\varphi^{**} \in \B^{***}$ under the canonical isometric embedding of $\B^*$ inside $\B^{***}$.

The polar $(\B_c^*)^\perp$ is a weak-$*$ closed ideal in $\B^{**}$ and so, there is a central projection $z_\alpha\in\B^{**}$ such that $(\B_c^*)^\perp = (I-z_\alpha)\B^{**}$. Then, Ikunishi's universal $\rW^*$-dynamical system $(\B_\alpha'', G, \ol{\alpha})$ is given by $\B_\alpha'':=z_\alpha\B^{**}$ where the action $\ol{\alpha}$ is given by $\ol{\alpha}_g(z_\alpha x) = z_\alpha \alpha^{**}_g(x)$ for every $x\in\B^{**}$ and $g\in G$. The action $\ol{\alpha}$ is then a \emph{point-weak-$*$} continuous action by $*$-automorphisms on $\B_\alpha''$. We refer the reader to \cite[Subsection 2.1 \& 2.2]{bearden2022amenable} for preliminaries on vector valued integration and on Ikunishi's construction, as well as \cites{Pedersen2018} for details on the von Neumann algebra structure on the bidual of a $\rC^*$-algebra.

In what follows, suppose $p\in\B^{**}$ is a central projection such that $\alpha^{**}(p) = p$. We denote by $\pi_p : \B \rightarrow p\B^{**}$ the natural compression map given by $\pi_p(b) = pb$. There is then a well-defined group homomorphism $\widetilde{\alpha}:G\rightarrow \mathrm{Aut}(p\B^{**})$ given by $\widetilde{\alpha}_g(px) = p\alpha^{**}_g(x)$ for $g\in G$ and $x\in\B^{**}$. However, in general this group homomorphism fails to be point weak*-continuous. Nevertheless, this can be salvaged by considering the continuous part.

To this end, we denote by $p \B^* : = (p\B^{**})_*$ the predual of $p\B^{**}$ and the continuous part by
\[
    (p \B^*)_c = \{ \varphi\in p \B^* ~:~ g\mapsto\varphi\circ\widetilde{\alpha}_g \text{ is norm continuous}\}.
\]
For any $\varphi \in \B_c^*$, we define $p \cdot \varphi$ via $(p\cdot \varphi)(b) := \varphi(pb) \ (= \varphi^{**}(pb))$ for every $b\in \B$. We remark that $(p\B^*)_c = p \cdot \B^*_c$. Indeed, fix $g\in G$ and $x\in p\B^{**}$. If $\varphi\in\B_c^*$, then
\[
    ((p\cdot\varphi)\circ\widetilde{\alpha}_g)(x) = \varphi(p\alpha_g^{**}(x)) = \varphi(\widetilde{\alpha}_g(px)) = (p\cdot(\varphi\circ\widetilde{\alpha}_g))(x)
\]
and so, $(p\cdot\varphi)\circ\widetilde{\alpha}_g = p\cdot(\varphi\circ \widetilde{\alpha}_g)$ for every $\varphi \in \B_c^*$. As $g\mapsto \varphi\circ\alpha_g$ is norm-continuous (so we also have that $g\mapsto (\varphi \circ \alpha_g)^{**} = \varphi^{**} \circ \alpha_g^{**}$ is norm-continuous), we get that $g\mapsto \varphi \circ \widetilde{\alpha}_g$ is norm continuous. Thus, we have that $g\mapsto p\cdot(\varphi\circ \widetilde{\alpha}_g) = (p\cdot\varphi)\circ\widetilde{\alpha}_g$ is norm-continuous as well, and we obtain the containment $p \cdot \B_c^*\subseteq (p \B^*)_c$.

Conversely, suppose $\varphi\in (p \B^*)_c$ and let $\widetilde{\varphi}:\B^{**}\rightarrow\bC$ by
\[
    \widetilde{\varphi}(x) = \varphi(px), \qquad x\in\B^{**}.
\]
In which case, $\widetilde{\varphi} = p\cdot\widetilde{\varphi}$ and, consequently,
\[
    (\widetilde{\varphi}\circ\alpha_g^{**})(x) = \varphi(p\alpha_g^{**}(x)) = \varphi(\widetilde{\alpha}_g(px)), \qquad x\in\B^{**}.
\]
Thus, $\widetilde{\varphi}\circ\alpha_g^{**} = p \cdot (\varphi\circ\widetilde{\alpha}_g)$. Hence,
\[
    \|\widetilde{\varphi}\circ\alpha_g^{**} - \widetilde{\varphi}\circ\alpha_h^{**}\| \leq \| \varphi\circ\widetilde{\alpha}_g - \varphi\circ\widetilde{\alpha}_h\|, \qquad g,h\in G.
\]
As $\varphi\in (p \B^*)_c$, it follows that $\widetilde{\varphi}\in\B_c^*$, and since $p\cdot\widetilde{\varphi} = \varphi$ we get that $\varphi\in p\cdot\B_c^*$.

We now prove a relative form of Ikunishi's construction. For our purposes, this gives us access to a plethora of $\rW^*$-dynamical covers of a $\rC^*$-dynamical system.

\begin{theorem}\label{t:w*-dyn}
    Let $(\B, G, \alpha)$ be a $\rC^*$-dynamical system, and $p\in\B^{**}$ be a central projection such that 
    \begin{enumerate}[{\rm (i)}]
        \item $\alpha^{**}_g(p) = p$ for every $g\in G$, and
        \item $\pi_p : \B \rightarrow p\B^{**}$ is injective.
    \end{enumerate} Then, for $q := p\cdot z_{\alpha} \in \mathrm{Z}(\B^{**})$ we have that $\pi_q : \B \rightarrow q \B^{**}$ is injective, and the induced group homomorphism $\widehat{\alpha} : G \rightarrow \mathrm{Aut}(q\B^{**})$ given by $\widehat{\alpha}_g(qx) = q\alpha_g^{**}(x)$ for $x\in \B^{**}$ is point-weak* continuous. Thus, $\pi_q : \B \rightarrow q\B^{**}$ is an equivariant embedding of $(\B,G,\alpha)$ into the $\rW^*$-dynamical system $(q\B^{**},G, \widehat{\alpha})$.
\end{theorem}

\begin{proof}
    First, we show that $(p \B^*_c)^\perp =(\B_c^*)^\perp\cap p\B^{**}$. Indeed, given $x\in (p\cdot\B^*_c)^\perp$, we have that $x\in p\B^{**}$. Thus, for $\varphi \in \B^*_c$, since $(p\cdot\varphi)(x) = \varphi(px) = \varphi(x)$, we get the containment $(p\cdot\B^*_c)^\perp \subseteq (\B_c^*)^\perp\cap p\B^{**}$. Conversely, given $x\in p\B^{**}$, we have that $x\in (p\cdot\B_c^*)^\perp$ if and only if $\varphi(x) =(p\cdot\varphi)(x) = 0$ for every $\varphi\in\B_c^*$. Thus, we have that $(p \B^*_c)^\perp =(\B_c^*)^\perp\cap p\B^{**}$.
    
    Now, by construction, we have that $(\B_c^*)^\perp = (I-z_\alpha)\B^{**}$, and so we get
    \[
        (p \B^*_c)^\perp = p\B^{**}\cap(I-z_\alpha)\B^{**} = p(I-z_\alpha)\B^{**} = (I-pz_\alpha)p\B^{**}.
    \]
    Thus, with the choice of $q= p\cdot z_{\alpha}$ we get that $(p \B^*_c)^\perp = (I-q)p\B^{**}$.

    We now show that $\pi_q$ is injective. Suppose that $a\in\B$ is such that $qa=0$. By assumption, it suffices to show that $pa=0$. Since we have that $pa = p(I-z_\alpha)a = (I-z_{\alpha})pa$, we get that $pa\in p\B \cap (I-z_{\alpha})\B^{**} = p\B \cap (\B_c^*)^\perp$. Therefore, we will show that $p\B \cap (\B_c^*)^\perp = \{0\}$, and it suffices to show that $\B_c^*$ separates points in $p\B$.
    
    Suppose $pa \neq 0$. As $\B^*$ separates points of $\B^{**}$, we may find some $\varphi \in \B^*$ such that $\varphi (pa)\neq 0$. Define $\widetilde{\varphi} \in \B^*$ by $\widetilde{\varphi}(x) := \varphi(px)$ for $x\in \B$, and the function $F: G\rightarrow \mathbb{C}$ by $F(g):= \widetilde{\varphi}(\alpha_{g^{-1}}(a))$ for $g\in G$. Note that $F$ is continuous, as the function $g\mapsto \alpha_{g^{-1}}(a)$ is norm continuous, and that $F(e) = \varphi(pa)\neq 0$.
    
    Let $(u_i)_i$ be a standard approximate identity for $L^1(G)$ in the sense that each $u_i$ lies in $\rC_c(G)$ and that $u_i \geq 0$, $\|u_i\|_1 =1$, and such that $\supp u_i \subseteq \U_i$ for a decreasing net of pre-compact open neighborhoods $\U_i$ of the identity element $e$, whose decreasing intersection is $\{e\}$. For each $i$, define a bounded linear functional $\widetilde{\varphi}_i$ on $\B$ by setting 
    \[
    \widetilde{\varphi}_i(a) := \int_G u_i(g)\widetilde{\varphi}(\alpha_{g^{-1}}(a))dg.
    \]
    For each $i$, since $u_i$ is a continuous function, it follows that $\widetilde{\varphi}_i \in \B^*_c$ (for instance, see the proof of \cite[Lemma 7.5.1]{Pedersen2018}). Since $F$ is continuous at $e$, along with our choice of approximate identity $(u_i)_i$, a standard harmonic analysis argument shows that $\widetilde{\varphi}_i(pa) = \widetilde{\varphi}_i(a) \rightarrow F(e) = \varphi(pa) \neq 0$. Thus, it follows that there exists some $\widetilde{\varphi}_i \in \B_c^*$ such that $\widetilde{\varphi}_i(pa) \neq 0$. Hence, $pa \notin (\B_c^*)^{\perp}$ and we deduce that $(\B_c^*)^{\perp}\cap p\B = \{0\}$.

    To conclude, since $q=pz_{\alpha}$ is $\alpha^{**}_g$ invariant for every $g\in G$, it follows that the group homomorphism $G \rightarrow \mathrm{Aut}(q\B^{**})$ given by $\widehat{\alpha}_g(qx) = q \alpha_g^{**}(x) = p \overline{\alpha}_g(z_{\alpha}x)$ is point-weak* continuous as a compression of the point-weak* continuous group homomorphism $\overline{\alpha}$. Thus, the triple $(q\B^{**},G,\widehat{\alpha})$ is a $\rW^*$-dynamical system, and the map $\pi_q: \B \rightarrow q\B^{**}$ is an equivariant embedding of $(\B,G,\alpha)$.
\end{proof}

\subsection{Unitary implementation of $\rW^*$-dynamical systems} \label{ss:unitary-implement}

In this subsection we provide an extension of a theorem of Henle on unitary implementation of actions on von Neumann algebras with properly infinite commutant \cite{henle1970spatial}. More precisely, we provide conditions on a von Neumann algebra $\M \subset \bB(\H)$ under which any point-weak* continuous action $\alpha:G\acts\M$ is automatically unitarily implemented via a strongly continuous unitary group representation of $G$ on $\H$. Under the assumption that $\M$ has a separable predual, Henle showed that this is possible if the commutant $\M'$ is properly infinite \cite{henle1970spatial}. Without assuming separability, Halpern was able to show that such unitary implementation is possible for semi-finite von Neumann algebras in standard form \cite{halpern1972unitary}. However, for our purposes we will need an extension of Henle's theorem to the non-separable setting.

Given a cardinal $\kappa$, a von Neumann algebra $\N\subset\bB(\H)$ will be called \emph{properly $\kappa$-infinite} if there is an ordinal $\gamma$ of cardinality $\kappa$ and a family $\{p_i : i < \gamma\}$ of mutually orthogonal (Murray-von Neumann) equivalent projections in $\N$ such that $\sum_{i <\gamma} p_i = I$ in the strong operator topology. When $\kappa =\aleph_0$, this turns out to be the usual notion of proper infiniteness of the von Neumann algebra $\N$. For completeness of our proof of the main result of this subsection, we include the details of non-separable variations on a few standard facts.

\begin{lemma}\label{l:amp-ue}
    Let $\H,\K$ be Hilbert spaces whose dimension is of fixed infinite cardinality $\kappa$. Suppose that $\pi:\M\rightarrow\bB(\H)$ and $\sigma:\M\rightarrow\bB(\K)$ are faithful normal $*$-representations of a von Neumann algebra $\M$. Then, the representations $\pi^{(\kappa)}$ and $\sigma^{(\kappa)}$ are unitarily equivalent.
\end{lemma}

\begin{proof}
    Since $\kappa$ is infinite we have that $\kappa^2 = \kappa$, so we have that $\sigma^{(\kappa)}$ is unitarily equivalent to $\sigma^{(\kappa^2)}$. Thus, by the Cantor-Schroeder-Bernstein theorem for $*$-representations (see \cite[Exercise I.38]{davidson1996c}), it suffices to show that every cyclic subrepresentation $\rho:\M\rightarrow\bB(\H_\rho)$ of $\pi$ is a subrepresentation of $\sigma^{(\kappa)}$. Indeed, in that case $\rho^{(\kappa)}$ will be a subrepresentation of $(\sigma^{(\kappa)})^{(\kappa)} = \sigma^{(\kappa^2)}$, so that $\pi^{(\kappa)}$ will be a subrepresentation of $\sigma^{(\kappa^2)}$, and therefore also a subrepresentation of $\sigma^{(\kappa)}$. So, let us show that the cyclic subrepresentation $\rho$ of $\pi$ is a subrepresentation of $\sigma^{(\kappa)}$. Let $\xi$ be a cyclic vector for $\rho$ and define a normal state
    \[
        \tau:\M\rightarrow\bC, \quad t\mapsto \langle\rho(t)\xi,\xi\rangle.
    \]
    Since $\sigma$ is faithful and $\tau$ is normal (and therefore weak*-continuous), there exists a unit vector $\eta\in\K^{(\kappa)}$ such that
    \[
        \tau(t) = \langle\sigma^{(\kappa)}(t)\eta,\eta\rangle, \quad t\in\M.
    \]
    Thus, by uniqueness of the minimal GNS representation, the representation $\rho$ is unitarily equivalent to the cyclic subrepresentation of $\sigma^{(\kappa)}$ on $\L = \ol{\sigma^{(\kappa)}(\M)\eta}$.
\end{proof}

The next fact we require is a variation on a well-known result concerning properly infinite von Neumann algebras \cite[Exercise 9.6.32]{kadison1986fundamentals}.

\begin{lemma}\label{l:inf-uni-implement}
    Let $\H,\K$ be Hilbert spaces whose dimension is a fixed infinite cardinal $\kappa$. Let $\M\subset \bB(\H)$ be a von Neumann algebra such that $\M'$ is properly $\kappa$-infinite. If $\pi:\M\rightarrow\bB(\K)$ is a faithful normal $*$-representation such that $\pi(\M)'$ is properly $\kappa$-infinite, then there is a unitary operator $u:\H\rightarrow\K$ such that $\pi(t) = utu^*$ for each $t\in \M$. 
\end{lemma}

\begin{proof}
    As $\pi(\M)'$ is properly $\kappa$-infinite, there is an ordinal $\gamma$ of cardinality $\kappa$ and a family $\{p_i: i < \gamma\}$ of Murray--von Neumann equivalent mutually orthogonal projections in $\pi(\M)'$ such that $\sum_{i < \gamma} p_i = I$. Extract a corresponding family $\{v_i :i < \gamma\}$ of partial isometries in $\pi(\M)'$ such that $v_i^*v_i = p_0$ and $v_iv_i^* = p_i$. Since $p_0\in\pi(\M)'$, the subspace $\K_0 = p_0\K$ is reducing for $\pi$, and we define a normal $*$-representation $\pi_0 = p_0\pi(\cdot)|_{\K_0}$ of $\M$. Define an operator
    \[
        W:\K \rightarrow \K_0 \otimes \ell^2(\gamma), \quad \xi\mapsto(v_i^*\xi)_{i < \gamma}.
    \]
    Note that $W$ is a unitary operator since it has dense range and satisfies
    \[
        \|W\xi\|^2 = \sum_{i < \gamma} \|v_i^*\xi\|^2 = \sum_{i < \gamma} \|p_i\xi\|^2 = \|\xi\|^2, \quad \xi\in \K.
    \]
    Moreover, for every $t\in \M$ we have $W\pi(t) W^* = \pi^{(\kappa)}_0(t)$. Indeed, observe that
    \[
        W\pi(t)W^*\xi = (v_i^*\pi(t)v_i\xi_i)_{i < \gamma} = (\pi(t) v_i^*v_i\xi_i)_{i  < \gamma} = (\pi(t)\xi_i)_{i < \gamma} = \pi_0^{(\kappa)}(t)\xi
    \]
    for every $\xi = (\xi_i)_{i < \gamma} \in \K_0 \otimes \ell^2(\gamma)$. Since the $\M'$ is also assumed to be properly $\kappa$-infinite, the very same procedure can be applied to the identity representation $\M \subset \bB(\H)$ to show that it too is unitarily equivalent to a $\kappa$-multiplicity of a subrepresentation of the identity representation. Thus, the conclusion follows by applying Lemma \ref{l:amp-ue} to $\pi$ and the identity representation.
\end{proof}

Now, let $G$ be a locally compact group. Let $L^2(G;\H)$ be the Hilbert space consisting of equivalence classes of Bochner measurable functions $f:G\rightarrow\H$, modulo equality almost everywhere, and where
\[
    \|f\| = \left( \int_G \|f(g)\|^2 dm(g)\right)^{1/2}<\infty
\]
with $m$ denoting left Haar measure on $G$. When $\{e_i\}_i\subset \H$ and $\{f_j\}_j\subset L^2(G)$ are orthonormal bases, we remark that $\{e_if_j\}_{i,j}$ is an orthonormal basis for $L^2(G;\H)$.

We now state the main result of this subsection, which is an extension of Henle's theorem \cite{henle1970spatial} to the case where $\M$ may not necessarily have a separable predual.

\begin{theorem}\label{t:henle}
    Let $\M\subset\bB(\H)$ be a von Neumann algebra and $G$ be a locally compact group such that $\alpha:G\acts\M$ is a point-weak* continuous action. Suppose that the dimension of $\H$ is of cardinality $\kappa$ satisfying $\kappa \geq \dim L^2(G)$. If $\M'$ is properly $\kappa$-infinite, then there is a strongly continuous unitary group representation $u : G \rightarrow \bB(\H)$ such that $\alpha_g(t) = u_gtu_g^*$ for every $t\in\M$ and every $g\in G$.
\end{theorem}

\begin{proof}
    Consider the map $\Phi:\M\rightarrow\bB(L^2(G;\H))$ defined by
    \[
        (\Phi(t)f)(g) = \alpha_{g^{-1}}(t)f(g).
    \]
    It is straightforward to see that $\Phi$ defines a faithful $*$-representation of $\M$.

    We claim that $\Phi$ is normal. Let $(t_i)_i$ be a bounded increasing net of positive elements in $\M$ whose supremum is $t$. Clearly, $\Phi(t_i)$ is a bounded increasing net, and we must show that $\sup_i \Phi(t_i)$, which exists as a strong operator topology limit by Vigier's lemma, is equal to $\Phi(t)$. Thus, since positive elements are separated by vector state compressions, it will suffice to show that the net $(\langle (\Phi(t) - \Phi(t_i))f, f \rangle)_i$ tends to $0$ for every $f \in L^2(G;\H)$.
    
    Since any $*$-automorphism on a von Neumann algebra is automatically normal, for each fixed $g\in G$ we see that $(\alpha_{g^{-1}}(t_i))_i$ has supremum $\alpha_{g^{-1}}(t)$. Let $E\subset G$ be a measurable set with $m(E)<\infty$. We claim that
    \begin{equation} \label{e:xi-dec-to-0}
        \int_E \langle (\alpha_{g^{-1}}(t)-\alpha_{g^{-1}}(t_i))\xi, \xi\rangle d m(g) \longrightarrow 0, \quad \xi\in\H.
    \end{equation}
    Observe that, the net of continuous functions $(g\mapsto \langle (\alpha_{g^{-1}}(t)-\alpha_{g^{-1}}(t_i))\xi, \xi\rangle)_i$ decreases pointwise to $0$. By inner-regularity of $m$, given $\varepsilon>0$ there is a compact set $K\subset E$ such that
    \[
        m(E\setminus K) < \frac{\varepsilon}{2\|t\|\|\xi\|^2}
    \]
    Thus, we have
    \[
        \int_{E\setminus K} \langle (\alpha_{g^{-1}}(t)-\alpha_{g^{-1}}(t_i))\xi, \xi\rangle d m(g) < \frac{\varepsilon}{2}.
    \]
    By Dini's theorem (for the version involving nets, see \cite[p. 239]{kelley2017general}), the net of continuous functions $(g\mapsto \langle (\alpha_{g^{-1}}(t)-\alpha_{g^{-1}}(t_i))\xi, \xi\rangle)_i$ converges uniformly on $K$. Thus, we may find $i_0$ such that
    \[
        \int_K \langle (\alpha_{g^{-1}}(t)-\alpha_{g^{-1}}(t_i))\xi, \xi\rangle d m(g) < \frac{\varepsilon}{2}, \quad i\geq i_0.
    \]
    Thus, we see that
    \[
        \int_E \langle (\alpha_{g^{-1}}(t)-\alpha_{g^{-1}}(t_i))\xi, \xi\rangle d m(g) < \varepsilon, \quad i\geq i_0,
    \]
    and we have established the validity of equation \eqref{e:xi-dec-to-0}.
    
    Now, suppose that $f(g)  = \sum_{k=1}^N\chi_{E_k}(g)\xi_k$ where $E_1,\ldots, E_N$ are pairwise disjoint measurable subsets of $G$ such that $m(E_i)<\infty$. Then, we have
    \begin{align*}
        \langle(\Phi(t)-\Phi(t_i))f, f\rangle & = \int_G\left\langle \sum_{k=1}^N \chi_{E_k}(g)\alpha_{g^{-1}}(t-t_i)\xi_k, \sum_{\ell=1}^N \chi_{E_\ell}(g)\xi_\ell\right\rangle dm(g)\\
        & = \sum_{k=1}^N \int_{E_k} \langle (\alpha_{g^{-1}}(t)-\alpha_{g^{-1}}(t_i))\xi_k, \xi_k\rangle dm(g).
    \end{align*}
    In which case, we see that $\langle(\Phi(t)-\Phi(t_i))f, f\rangle \rightarrow0$. Consequently, by a standard density argument we see that for every $f\in L^2(G;\H)$ we have $\langle(\Phi(t)-\Phi(t_i)) f, f\rangle \rightarrow 0$. Hence, the bounded increasing net $\Phi(t_i)$ necessarily has supremum $\Phi(t)$, and we get that $\Phi$ is normal.
        
    The proof now proceeds essentially as in \cite{henle1970spatial}. We define a normal $*$-embedding $M : \M' \rightarrow \Phi(\M)'$ by setting $(M_T)(f)(g)=T(f(g))$ for $f \in L^2(G;\H)$. Then, it is easy to see that $M_T \in \Phi(\M)'$ for every $T\in \M'$, and a similar (but easier) argument to the one above for normality of $\Phi$ shows that the $*$-embedding $M$ is normal. Thus, we may consider $\M'$ as a unital von Neumann subalgebra of $\Phi(\M)'$, and since $\M'$ is properly $\kappa$-infinite, we get that $\Phi(\M)'$ is also properly $\kappa$-infinite. In addition, since $\kappa \geq \dim L^2(G)$, we see that the dimensions of $\H$ and $L^2(G;\H)$ must therefore both be equal to $\kappa$. By Lemma \ref{l:inf-uni-implement}, there is then a unitary operator $w\in\bB(\H, L^2(G;\H))$ such that $\Phi(t) = wtw^*$ for every $t\in\M$.

    Now, observe that the left shift operators
    \[
        \lambda_gf(h) = f(g^{-1}h), \quad g,h\in G, f\in L^2(G;\H)
    \]
    define a strongly continuous unitary representation $\lambda:G\rightarrow\bB(L^2(G;\H))$ which satisfies
    \[
        (\lambda_h\Phi(t)\lambda_h^*f)(g) = (\Phi(t) \lambda_h^* f)(h^{-1}g) = \alpha_{g^{-1}h}(t)f(g) = (\Phi(\alpha_h(t))f)(g)
    \]
    for each $t\in \M$, $g,h\in G$, and $f\in L^2(G;\H)$. For $g\in G$, define the strongly continuous unitary group representation $u : G \rightarrow \bB(\H)$ by setting $u_g := w^*\lambda_gw$. Then, we must necessarily have $\alpha_g(t) = u_g tu_g^*$ for every $t\in\M$ and $g\in G$.
\end{proof}

\subsection{Operator-valued measurable lifting theorems.} \label{ss:maharam}

In this subsection, we prove an operator-valued generalization of a classical measurable lifting theorem, which is usually attributed to Maharam \cite{maharam1958theorem}. More precisely, let $(X, \F, \mu)$ be a complete measure space and $M^\infty(\F)$ denote the space of bounded measurable functions on $X$, which is a $\rC^*$-subalgebra of all bounded functions $\ell^{\infty}(X)$. Then, there is a surjective $*$-homomorphism $q:M^\infty(\F)\rightarrow L^\infty(X, \mu)$ defined by identifying functions $\mu$-a.e. A \emph{lifting} for $(X, \F, \mu)$ is a unital $*$-homomor\-phism $\rho: L^\infty(X, \mu)\rightarrow M^\infty(\F)$ such that $q\circ\rho = \id$. The lifting theorem of Maharam \cite{maharam1958theorem} states that $(X, \F, \mu)$ always admits a lifting when $\mu$ is a finite measure. The existence of a lift is known in several other circumstances \cite{tulcea2012topics}, including the measure space $(G,\F,\mu)$ where $G$ is a locally compact Hausdorff group, $\mu$ is left Haar measure, and $\F$ is the completion of Borel $\sigma$-algebra on $G$ with respect to $\mu$ \cite{tulcea1967existence}.

Our lifting result will apply Hamana's work on Fubini tensor products \cite{hamana1982tensor}. For this, fix a pair of Hilbert spaces $\H, \K$ and let $\{e_\alpha : \alpha\in A\}$ be an orthonormal basis for $\H$. For $\alpha,\beta\in A$, we define rank-one operators on $\H$ by $E_{\alpha\beta}(h) = \langle h, e_\beta\rangle e_\alpha$. Furthermore, for each $\alpha\in A$, let $J_\alpha: \K\rightarrow\H\otimes\K$ be the isometry defined by $J_\alpha (k) = e_\alpha\otimes k$. Then, each $x\in \bB(\H\otimes\K)$ can be expressed as
\[x = \sum_{\alpha, \beta\in A} E_{\alpha\beta}\otimes J_\alpha^* xJ_\beta \]
where the sum converges in the strong operator topology. Let $\S\subseteq \bB(\K)$ be a norm-closed operator system. Following \cite[Lemma 3.4]{hamana1982tensor}, we may form an operator system 
\[\bB(\H)\ol{\otimes}\S : = \{x \in \bB(\H\otimes\K) ~:~  (J_\alpha^* xJ_\beta)\in\S \text{ for all $\alpha,\beta\in A$}\}.\]
When $\M$ is a von Neumann algebra, $\bB(\H)\ol{\otimes}\M$ agrees with the usual von Neumann algebra tensor product \cite[Theorem 3.12 (ii)]{hamana1982tensor}.

From \cite[Lemma 3.5]{hamana1982tensor}, we can guarantee that there is a unique unital completely positive map 
\[\id\ol{\otimes}q:\bB(\H)\ol{\otimes} M^\infty(\F) \rightarrow \bB(\H)\ol{\otimes}L^\infty(X, \mu)\]
that extends $\id\odot q$. In fact, we will show that whenever a lifting for $(X, \F, \mu)$ exists, the map $\id\ol{\otimes}q$ has a natural unital completely positive right inverse. Before proceeding with the proof, we record some relevant information. Recall that a $\rC^*$-algebra $\B$ is said to be \emph{monotone complete} (respectively, \emph{$\sigma$-monotone complete}) if every increasing bounded net (sequence) of self-adjoint operators in $\B$ has a least upper bound in $\B$.

In \cite[Theorem 4.2]{hamana1982tensor}, Hamana proved that $\bB(\H)\ol{\otimes} \B$ is a monotone complete $\rC^*$-algebra whenever $\B$ is. Furthermore, in \cite[Theorem 6.1]{saito2016tensor}, Sait\^o showed that when $\H$ is separable, and $\B$ is $\sigma$-monotone complete, then $\bB(\H)\ol{\otimes} \B$ is also a $\sigma$-monotone complete $\rC^*$-algebra. For our case of interest, $M^\infty(\F)$ is merely $\sigma$-monotone complete and we will need $\H$ to be potentially non-separable. Thus, at least a priori, we only know that $\bB(\H)\ol{\otimes}M^\infty(\F)$ is an operator system. This is an important distinction, as there are examples of commutative $\rC^*$-algebras acting on non-separable Hilbert space for which $\bB(\H)\ol{\otimes}\B$ is not a $\rC^*$-algebra \cite[Section 5]{saito2016tensor}.

\begin{proposition}\label{p:maharam}
    Let $(X, \F, \mu)$ be a complete measure space and $\H$ be a Hilbert space. Let $\id\ol{\otimes}q: \bB(\H)\ol{\otimes}M^\infty(\F)\rightarrow \bB(\H)\ol{\otimes} L^\infty(X, \mu)$ be the unique unital completely positive map extending $\id\odot q$. If there is a lift $\rho$ for $(X, \F, \mu)$, then the unique unital completely positive map $\id\ol{\otimes}\rho: \bB(\H)\ol{\otimes}L^\infty(X, \mu)\rightarrow \bB(\H)\ol{\otimes} M^\infty(\F)$ is a unital complete order embedding extending $\id\odot\rho$, and satisfies $(\id\ol{\otimes} q)\circ(\id\ol{\otimes}\rho) = \id$.
\end{proposition}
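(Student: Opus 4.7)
My plan is to parallel the construction of $\id\ol{\otimes}q$ given in the paragraph preceding this proposition, using the same machinery from Hamana's Fubini tensor products. First, since the lifting $\rho:L^\infty(X,\mu)\to M^\infty(\F)$ is a unital $*$-homomorphism (in particular UCP), I would invoke \cite[Lemma 3.5]{hamana1982tensor} to obtain a unique UCP map
\[
\id\ol{\otimes}\rho:\bB(\H)\ol{\otimes}L^\infty(X,\mu)\to\bB(\H)\ol{\otimes}M^\infty(\F)
\]
extending $\id\odot\rho$. This handles the existence, uniqueness, and extension assertions in one stroke.

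Next, I would establish the composition identity $(\id\ol{\otimes}q)\circ(\id\ol{\otimes}\rho)=\id$ by a matrix-entry computation. Both $\id\ol{\otimes}q$ and $\id\ol{\otimes}\rho$ are constructed to satisfy $J_\alpha^*\bigl((\id\ol{\otimes}\psi)(x)\bigr)J_\beta=\psi(J_\alpha^*xJ_\beta)$ for every pair $\alpha,\beta$ of basis indices. Combining this with $q\circ\rho=\id$ yields
\[
J_\alpha^*\bigl((\id\ol{\otimes}q)\circ(\id\ol{\otimes}\rho)(x)\bigr)J_\beta = q\bigl(\rho(J_\alpha^*xJ_\beta)\bigr) = J_\alpha^*xJ_\beta
\]
for every $x\in\bB(\H)\ol{\otimes}L^\infty(X,\mu)$. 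Since elements of the Fubini tensor product are determined by their matrix entries, the composition is the identity. Equivalently, the uniqueness half of Hamana's Lemma applied to $\id_{L^\infty(X,\mu)}$ forces the same conclusion.

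The complete order embedding property would then follow from a standard argument exploiting this UCP left inverse. Complete positivity of $\id\ol{\otimes}\rho$ handles one direction. For the converse, if $[x_{ij}]\in M_n(\bB(\H)\ol{\otimes}L^\infty(X,\mu))$ satisfies $[(\id\ol{\otimes}\rho)(x_{ij})]\ge 0$, then applying the UCP amplification $(\id\ol{\otimes}q)^{(n)}$ and invoking the composition identity above recovers $[x_{ij}]\ge 0$.

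The subtlety I expect to be most careful about is that $\bB(\H)\ol{\otimes}M^\infty(\F)$ is only guaranteed to be an operator system, not a C*-algebra, when $\H$ is non-separable and $M^\infty(\F)$ is merely $\sigma$-monotone complete. This is exactly the point emphasized in the preamble, and it is precisely why the Fubini/matrix-entry framework is the right one here: neither the application of Hamana's Lemma 3.5 to $\rho$ nor the verification of the composition identity requires any multiplicative structure on the codomain, so the argument proceeds uniformly in the general (possibly non-separable) setting required by the rest of the paper.
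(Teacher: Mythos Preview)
Your proposal is correct and follows essentially the same route as the paper: both arguments invoke \cite[Lemma 3.5]{hamana1982tensor} to construct $\id\ol{\otimes}\rho$, and both deduce $(\id\ol{\otimes}q)\circ(\id\ol{\otimes}\rho)=\id$ from the uniqueness clause of that lemma applied to the identity map. The only cosmetic difference is that the paper cites part (ii) of Hamana's lemma to obtain the complete order embedding property directly (since $\rho$ itself is a complete order embedding), whereas you derive it afterward from the existence of the UCP left inverse; these are equivalent and equally valid.
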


\begin{proof}
As $\rho$ is a unital complete order embedding, \cite[Lemma 3.5 (ii)]{hamana1982tensor} implies that $\id\odot\rho$ uniquely extends to a unital complete order embedding \[\id\ol{\otimes}\rho: \bB(\H)\ol{\otimes}L^\infty(X, \mu)\rightarrow \bB(\H)\ol{\otimes} M^\infty(\F).\]Since $\id\ol{\otimes}q$ and $\id\ol{\otimes}\rho$ are unital completely positive maps extending $\id\odot q$ and $\id\odot\rho$, respectively, we conclude that $(\id\ol{\otimes}q)\circ(\id\ol{\otimes}\rho) = \id$ by uniqueness of \cite[Lemma 3.5 (i)]{hamana1982tensor}.
\end{proof}

\section{Reduced crossed products and their C*-envelopes}\label{s:cross-prod-env}

The goal of this section is to prove the commutation property of the $\rC^*$-envelope with the reduced crossed product of the operator algebra dynamical system. Let $G$ be a locally compact Hausdorff group and $m$ denote the left Haar measure on $G$. Consider an operator algebra $\A$ with a contractive approximate identity and a point-norm continuous action $\alpha:G\curvearrowright\A$ by completely isometric automorphisms. Throughout, we will refer to the triple $(\A, G, \alpha)$ as a \emph{dynamical system}.

A completely contractive representation $\pi:\A\rightarrow\bB(\H)$ is said to be \emph{$\alpha$-admissible} if there is an action $\widehat{\alpha} : G \curvearrowright \rC^*(\pi(\mathcal{A}))$ such that $\widehat{\alpha}_g \circ \pi = \pi \circ \alpha_g$. When the $\alpha$-admissible representation $\pi$ is completely isometric and considered as an embedding, we will abuse notation and continue to denote the extended action $\widehat{\alpha}$ by $\alpha$. A dynamical system always admits a completely isometric $\alpha$-admissible embedding. For instance, the embedding $\varepsilon:\A\rightarrow\rC^*_e(\A)$ is always $\alpha$-admissible \cite[Lemma 3.3]{katsoulis2019crossed}. A \emph{covariant pair} $(\pi, \mu)$ for $(\A, G, \alpha)$ consists of a non-degenerate representation $\pi:\A\rightarrow\bB(\H)$ and a strongly continuous representation $\mu: G \rightarrow\bB(\H)$ such that \[\pi(\alpha_g(a)) = \mu_g \pi(a)\mu_g^*, \quad g\in G.\]

Note that whenever $(\pi,\mu)$ is a covariant pair, it follows that $\pi$ is automatically $\alpha$-admissible where the action $\widehat{\alpha}$ of $G$ on $C^*(\pi(A))$ is given by $\widehat{\alpha}(T) = \mu_gT\mu_g^*$. Now, given a non-degenerate representation $\pi : \A \rightarrow \bB(\H)$, we may form the non-degenerate representation $\pi_{\alpha} : \A \rightarrow L^{\infty}(G; \bB(\H)) \cong \bB(\H) \overline{\otimes} L^{\infty}(G) \subseteq \bB(\H \otimes L^2(G))$ by setting $\pi_{\alpha}(a)(g) := (\pi \circ \alpha_g^{-1})(a)$ for $g\in G$. Moreover, note that since $g \mapsto \pi \circ \alpha_g$ is point-norm continuous, the image of $\pi_{\alpha}$ is actually contained in $C_b(G;\bB(\H))$. The pair $(\pi_{\alpha},\id \otimes \lambda)$ is then readily verified to be covariant. In particular, $\pi_{\alpha}$ is automatically $\alpha$-admissible.

Given a dynamical system $(\A,G,\alpha)$ and a covariant pair $(\pi,\mu)$, one may form the integrated form $\pi \rtimes \mu$ on $C_c(G,\A)$ given by $[\pi \rtimes \mu](f) = \int_G \pi(f(g))\mu_g dm(g)$, where the latter integral is understood as in \cite[Lemma 1.91]{williams2007book}. Since the reduced crossed product for $(\A, G, \alpha)$ is independent of the choice of a completely isometric $\alpha$-admissible $\pi$ \cite[Corollary 3.16]{katsoulis2019crossed}, we define the reduced crossed product as follows. 

\begin{definition}
\emph{Let $(\A, G, \alpha)$ be a dynamical system and $\pi : \mathcal{A} \rightarrow \mathbb{B}(\mathcal{H})$ be a completely isometric $\alpha$-admissible representation. Then, the \emph{reduced crossed product} $\mathcal{A} \rtimes_{\alpha,r} G$ is the closure of the image of $C_c(G,\A)$ under $\pi_{\alpha} \rtimes (\id \otimes \lambda)$.}
\end{definition}

The following proposition is one of the key technical devices which allows us to push the strategy of \cite{katsoulis2021non} further.

\begin{proposition} \label{p:tmax-action-pres}
Let $(\B, G, \alpha)$ be a C*-dynamical system, and let $\A \subseteq \B = C^*(\A)$ be an $\alpha$-invariant operator subalgebra generating $\B$. Suppose $\pi : \mathcal{B} \rightarrow \mathbb{B}(\mathcal{H})$ is a non-degenerate $*$-representation that has the unique extension property with respect to $\A$. Then $\pi_{\alpha}$ is the unique completely contractive completely positive extension of $\pi_{\alpha}|_{\A}$ to $\B$ with range contained in $\bB(\H)\ol{\otimes}L^{\infty}(G)$.
\end{proposition}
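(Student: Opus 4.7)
The plan is to prove uniqueness by constructing, for each $g \in G$, a canonical ``evaluation'' ucp map $E_g : \bB(\H) \ol{\otimes} L^\infty(G) \to \bB(\H)$, and then exploiting that each translate $\pi \circ \alpha_g^{-1}$ inherits the unique extension property from $\pi$. First I would fix a strong lifting $\rho : L^\infty(G,m) \to M^\infty(\F)$ for left Haar measure, so that $\rho(q(f)) = f$ for every $f \in C_b(G)$; this is a classical refinement of the Ionescu-Tulcea theorem cited in Section \ref{s:maharam}. Proposition \ref{p:maharam} then furnishes a unital complete order embedding $\id \ol{\otimes} \rho$. For each $g \in G$, the character $\ev_g$ on $M^\infty(\F)$ extends via Hamana's Fubini construction to a ucp slice map $\id \otimes \ev_g : \bB(\H) \ol{\otimes} M^\infty(\F) \to \bB(\H)$ acting on matrix coefficients by $J_\alpha^* T J_\beta \mapsto (J_\alpha^* T J_\beta)(g)$. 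Set $E_g := (\id \otimes \ev_g) \circ (\id \ol{\otimes} \rho)$.

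Two observations drive the argument. First, on $\A$ one has $E_g \circ \pi_\alpha = \pi \circ \alpha_g^{-1}$: the matrix coefficients of $\pi_\alpha(a)$ are the continuous functions $g \mapsto \langle \pi(\alpha_g^{-1}(a)) e_\beta, e_\alpha \rangle$, and strongness of $\rho$ ensures these functions are fixed by $\rho \circ q$, so evaluating at $g$ recovers $\pi(\alpha_g^{-1}(a))$. Second, because $\A$ is $\alpha$-invariant, for every $g$ the $*$-representation $\pi \circ \alpha_g^{-1}$ also has the unique extension property with respect to $\A$: given any ccp map $\phi : \B \to \bB(\H)$ with $\phi|_\A = (\pi \circ \alpha_g^{-1})|_\A$, the composition $\phi \circ \alpha_g$ is ccp and agrees with $\pi$ on $\A$, so UEP of $\pi$ yields $\phi \circ \alpha_g = \pi$, whence $\phi = \pi \circ \alpha_g^{-1}$.

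Combining these, given any ccp extension $\psi : \B \to \bB(\H) \ol{\otimes} L^\infty(G)$ of $\pi_\alpha|_\A$, the composition $E_g \circ \psi$ is a ccp extension of $(\pi \circ \alpha_g^{-1})|_\A$ to $\B$ with range in $\bB(\H)$, and UEP forces $E_g \circ \psi = \pi \circ \alpha_g^{-1} = E_g \circ \pi_\alpha$ for every $g \in G$. To promote this pointwise equality to $\psi = \pi_\alpha$, note that the matrix coefficients of $E_g X$ are precisely $\rho(J_\alpha^* X J_\beta)(g)$. Thus equality at every $g$ gives $\rho(J_\alpha^* \psi(b) J_\beta) = \rho(J_\alpha^* \pi_\alpha(b) J_\beta)$ in $M^\infty(\F)$; applying $q$ and using $q \circ \rho = \id$ shows that the matrix coefficients of $\psi(b)$ and $\pi_\alpha(b)$ agree in $L^\infty(G)$ for all $\alpha, \beta$, hence $\psi(b) = \pi_\alpha(b)$ for every $b \in \B$.

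The principal obstacle is producing $E_g$ so that $E_g(\pi_\alpha(a)) = \pi(\alpha_g^{-1}(a))$ holds pointwise for \emph{every} $g$, not merely for almost every $g$. A generic Maharam-type lifting only guarantees $\rho(q(f)) = f$ almost everywhere, which is insufficient when one wants to run the UEP argument uniformly over all $g$; if $\H$ is non-separable one also cannot escape this by exhausting $\A$ through a countable dense set. This is precisely why the strong version of the Ionescu-Tulcea lifting, tailored to Haar measure, is essential; without it the family $\{E_g\}_{g \in G}$ would not detect $\pi_\alpha$ on the nose, and one could not conclude equality of matrix coefficients pointwise in $M^\infty(\F)$.
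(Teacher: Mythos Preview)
Your argument is correct and follows essentially the same route as the paper's proof: both invoke the strong Ionescu--Tulcea lifting fixing $C_b(G)$, build the evaluation maps $E_g=(\id\ol{\otimes}\ev_g)\circ(\id\ol{\otimes}\rho)$, use the invariance principle to transfer the unique extension property to each $\pi\circ\alpha_g^{-1}$, and then pass from pointwise equality back to $\bB(\H)\ol{\otimes}L^\infty(G)$ via $q\circ\rho=\id$. The only cosmetic difference is that the paper packages the last step as $\pi_\alpha=\oplus_g\varphi_g=(\id\ol{\otimes}\rho)\circ\varphi$ followed by an application of $\id\ol{\otimes}q$, whereas you unwind this at the level of matrix coefficients; note also that your identity $E_g\circ\pi_\alpha=\pi\circ\alpha_g^{-1}$, which you state only on $\A$, in fact holds on all of $\B$ by the same continuity argument, and you implicitly use this in the final step.
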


\begin{proof}
Let $\varphi : \B \rightarrow \bB(\H)\ol{\otimes}L^{\infty}(G)$ be a completely contractive completely positive extension of $\pi_{\alpha}|_{\A}$. We show that $\varphi = \pi_{\alpha}$. To this end, let $(G,\F,m)$ be the measure space where $\F$ is the completion of the Borel $\sigma$-algebra on $G$ with respect to left Haar measure $m$. By \cite[Theorem 5]{tulcea1967existence}, there exists an equivariant lifting $\rho : L^{\infty}(G) \rightarrow M^{\infty}(\F)$ for $(G, \F, m)$, and by \cite[Proposition 1]{tulcea1967existence} we may assume that $\rho$ acts as the identity on $C_b(G)$. Thus, by Proposition \ref{p:maharam}, the map $\id\ol{\otimes} q$ has a right inverse 
\[ \id\ol{\otimes} \rho:  \bB(\H)\ol{\otimes}L^{\infty}(G) \rightarrow  \bB(\H)\ol{\otimes} M^{\infty}(\F),\]
and $\id \ol{\otimes}\rho$ acts as the identity on $\bB(\H) \otimes \rC_b(G)$ as well.

By \cite[Lemma 3.5 (i)]{hamana1982tensor}, the map $\id \ol{\otimes} \ev_g: \bB(\H)\ol{\otimes} M^{\infty}(\F)  \rightarrow \bB(\H)$ is a well-defined UCP map for each $g\in G$. Thus, we have a family of completely contractive completely positive maps $\varphi_g : = (\id \ol{\otimes} \ev_g) \circ (\id \ol{\otimes} \rho) \circ \varphi$ which is obtained by point evaluation of the lifted map $(\id \ol{\otimes} \rho) \circ \varphi$. Since the range of $\pi_{\alpha}$ is contained in $C_b(G;\bB(\H)) \cong \bB(\H) \otimes \rC_b(G)$, we have that $(\id\ol{\otimes} \rho) \circ \pi_{\alpha} = \pi_{\alpha}$. It follows by definition of $q$ that it is the identity on $C_b(G)$, so that $(\id \ol{\otimes} q) \circ \pi_{\alpha} = \pi_{\alpha}$ as well. 

Now, since $\varphi$ is an extension of $\pi_{\alpha}|_{\A}$, we have that $\varphi_g(a) = (\pi \circ \alpha_g^{-1})(a)$ for each $a\in \A$.  Hence, we see that $\varphi_g|_{\A} = \pi \circ \alpha_g^{-1}|_{\A}$. Since $\pi$ has the unique extension property with respect to $\A$, by the invariance principle (see for instance the discussion after \cite[Proposition 2.4]{dor2018full}) we find that $\pi \circ \alpha_g$ also has the unique extension property with respect to $\A$. Hence, we conclude that $\pi \circ \alpha_g^{-1} = \varphi_g$ for all $g\in G$. Therefore, the lifted map $(\id \ol{\otimes} \rho) \circ \varphi$ and $\pi_{\alpha}$ have the same point evaluations for every $g\in G$, and we deduce that $(\id \ol{\otimes} \rho) \circ \varphi = \pi_{\alpha}$. By composing with $\id \ol{\otimes} q$ on the left, and using the fact that $\id \ol{\otimes} \rho$ is a right inverse for $\id \ol{\otimes} q,$ we obtain that $\pi_{\alpha} = \varphi$ as desired.
\end{proof}

Another tool we require is the existence of an isometric $*$-representation of the $\rC^*$-envelope that simultaneously has the unique extension property and such that the action is unitarily implemented via the $*$-representation. This is achieved through the use of the machinery developed in Subsections \ref{ss:w*-dyn} and \ref{ss:unitary-implement}.

\begin{proposition}\label{p:uep-unitary-implement}
    Let $(\A, G, \alpha)$ be a dynamical system. Then, there is an injective $*$-representation $\pi:\rC^*_e(\A)\rightarrow\bB(\H)$ that has the unique extension property with respect to $\A$, and a strongly continuous unitary representation $u:G\rightarrow\bB(\H)$ such that $\pi(\alpha_g(t)) = u_g \pi(t)u_g^*$ for every $g\in G$ and every $t\in\rC^*_e(\A)$.
\end{proposition}

\begin{proof}
    First, we claim that there is an injective $*$-representation of $\rC^*_e(\A)$ that has the unique extension property with respect to $\A$ and such that the action $\alpha$ extends to a $\rW^*$-dynamical system on the image of its double commutant. For this, let $\sigma: \rC^*_e(\A)\rightarrow \bB(\H)$ denote the largest subrepresentation of the universal representation of $\rC^*_e(\A)$ that has the unique extension property with respect to $\A$, which is known to be injective by the Dritschel--McCullough theorem \cite{dritschel2005boundary}. In addition, let $(\rC^*_e(\A)_\alpha'', G, \ol{\alpha})$ be the universal $\rW^*$-dynamical system, together with the corresponding equivariant injective $*$-representation $\rho: \rC^*_e(\A)\rightarrow\rC^*_e(\A)_\alpha''$ that satisfies $\rC^*_e(\A)_\alpha'' = \rho(\rC^*_e(\A))''$. We denote by $z_\alpha$ and $z_\partial$ the central projections in $\rC^*_e(\A)^{**}$ that satisfy $\ker\rho^{\perp\perp} = (I-z_\alpha) \rC^*_e(\A)^{**}$ and $\ker\sigma^{\perp\perp} = (I-z_\partial) \rC^*_e(\A)^{**}$.

    Since $\alpha_g$ and $\alpha_{g^{-1}}$ map $\A$ onto itself for every $g\in G$, by the invariance principle (see for instance the discussion after \cite[Proposition 2.4]{dor2018full}),
    the map $\rho \mapsto \rho \circ \alpha_g$ gives rise to a bijection between representations $\rho$ of $\rC^*_e(\A)$ with the unique extension property on $\A$. Thus, the rule $\rho \mapsto \rho \circ \alpha_g$ induces a bijection on the unitary equivalence classes of subrepresentations of $\sigma$, so we get that $\alpha^{**}_g(z_\partial) = z_\partial$ for each $g\in G$. 

    Now, consider the central projection in $\rC^*_e(\A)^{**}$ given by $z = z_\partial z_\alpha$ . Then clearly $z\rC^*_e(\A)^{**} = z_{\partial} \rC^*_e(\A)_\alpha''$, and we claim that
    \[
        \pi:\rC^*_e(\A) \rightarrow z\rC^*_e(\A)^{**}, \quad t\mapsto zt,
    \]
    is an injective $*$-representation of $\rC^*_e(\A)$ via which the action $\alpha$ extends to a $\rW^*$-dynamical system on $z\rC^*_e(\A)^{**}$. Since $z_{\partial}$ is fixed by $\alpha^{**}$, we get a group homomorphism $\beta : G \rightarrow \mathrm{Aut}( z\rC^*_e(\A)^{**})$ given by $\beta_g(z x) := z \alpha^{**}_g(x)$ for every $x\in \rC^*_e(\A)^{**}$ and $g\in G$. Note also that $\beta_g(z_{\partial} x) = z_{\partial}\overline{\alpha}_g(x)$ for every $x \in \rC^*_e(\A)''_{\alpha}$ and $g\in G$. Now, since $\overline{\alpha}$ is a point-weak* continuous action, it is clear that $\beta$ is also a point-weak* continuous action. Thus, we see that $(z_{\partial} \rC^*_e(\A)_\alpha'', G, \beta)$ is a $\rW^*$-dynamical system. Now, by Theorem \ref{t:w*-dyn}, since $\sigma$ is unitarily equivalent to the injective $*$-representation $a \mapsto z_{\partial}a$, we get that the $*$-representation $\pi:\rC^*_e(\A)\rightarrow z\rC^*_e(\A)^{**}$ defined by $\pi(t) = zt$ is isometric. Moreover, since $z\leq z_\partial$ is a central projection, the $*$-representation $\pi$ can be identified with a direct summand of $\sigma$, and must therefore have unique extension property with respect to $\A$.

    It remains to verify that $\pi$ may be chosen so that the action is unitarily implemented. Indeed, upon replacing $\pi$ with $\pi^{(\kappa)}$ for a cardinal $\kappa$ so that the dimension of $\H^{(\kappa)}$ is equal to $\kappa$ and $\kappa \geq \dim L^2(G)$, we have that $\pi(\rC^*_e(\A))'$ is properly $\kappa$-infinite. Thus, by Theorem \ref{t:henle} we conclude that the $\rW^*$-dynamical system $(\pi(\rC^*_e(\A))'' \ol{\otimes} \mathbb{C}I_{\ell^2(\kappa)}, G, \beta\otimes\id)$ is unitarily implemented via a strongly continuous unitary group representation of $G$. Furthermore, $\pi^{(\kappa)}$ being a direct sum of a representation with the unique extension property with respect to $\A$, continues to have the unique extension property with respect to $\A$.
\end{proof}

In what follows, we will require the use of the \emph{multiplier algebra} of an approximately unital operator algebra $\A$, which is the unital operator algebra of left-right multipliers
\[
    M(\A) = \{ x\in\A^{**} ~:~ x\A\subset\A, \ \A x\subset\A\}.
\]
One may refer to \cite[Chapter 5]{lance1995hilbert} and \cite[Section 2.6]{blecher2004operator} for more, and specifically to \cite[Theorem 2.6.2 and 2.6.7]{blecher2004operator} for a few other equivalent formulations of $M(\A)$. Recall that a net $(x_\lambda)_\lambda$ in $M(\A)$ is said to converge in the strict topology to $x\in M(\A)$ precisely when
\[
    \|x_\lambda a - xa\|, \ \| ax_\lambda-ax\|\rightarrow0, \quad a\in\A.
\]
Now, suppose that $\A$ and $\B$ are operator algebras with contractive approximate identities. We say that a map $\varphi:\A\rightarrow M(\B)$ is \emph{non-degenerate} if, for \emph{some} contractive approximate identity $(e_i)$ of $\A$, the net $(\varphi(e_i))$ forms a contractive approximate identity for $\B$. We will have two situations where a non-degenerate map $\varphi : \A \rightarrow M(\B)$ extends to a unital map $\overline{\varphi} : M(\A) \rightarrow M(\B)$ that is strictly continuous on the unit ball, namely when
\begin{enumerate}
\item $\A$ is a $\rC^*$-algebra and $\varphi$ is completely positive \cite[Corollary 5.7]{lance1995hilbert}, and;
\item $\varphi$ is a completely contractive homomorphism \cite[Proposition 2.6.12]{blecher2004operator}.
\end{enumerate}
For instance, when $\A \subseteq M(\B)$ is a non-degenerate containment of operator algebras, from \cite[Proposition 2.6.12]{blecher2004operator} we may deduce the unital containment $M(\A) \subseteq M(\B)$. We note that our definition of non-degeneracy (as opposed to the definition of multiplier non-degenerate morphism in \cite[2.6.11]{blecher2004operator}) is sufficient for the proof of \cite[Proposition 2.6.12]{blecher2004operator} to go through verbatim. 

When $(\A,G,\alpha)$ is a dynamical system, since $\A$ has a contractive approximate identity $(e_{\alpha})_{\alpha}$, by \cite[Lemma 3.5]{katsoulis2019crossed} there is a contractive approximate identity $(f_{\lambda})_{\lambda}$ of $C_c(G)$ (in the $L^1$ norm) so that the image of $(e_{\alpha} \cdot f_{\lambda})_{\alpha,\lambda}$ under $\pi_{\alpha} \rtimes (\id \otimes \lambda)$ is an approximate identity for $\A \rtimes_{\alpha,r} G$. 

We now prove that the reduced crossed product of a locally compact group action on an operator algebra commutes with the $\rC^*$-envelope. This provides a complete answer to a problem that was first considered by Katsoulis and Ramsey in \cite{katsoulis2019crossed}, and removes the assumption of the contractive approximate identity being \emph{self-adjoint}.

\begin{theorem}\label{t:red-cross-iso}
Let $(\A, G, \alpha)$ be a dynamical system. Then, $C^*_e(\A) \rtimes_{\alpha,r} G \cong C^*_e(\A \rtimes_{\alpha,r} G)$ via the canonical, generator preserving map.
\end{theorem}

\begin{proof}
    By Proposition \ref{p:uep-unitary-implement}, there is an isometric $*$-representation $\pi: \rC^*_e(\A)\rightarrow\bB(\H)$ that has the unique extension property with respect to $\A$ and such that $\alpha$ is implemented by a unitary representation $u:G\rightarrow\bB(\H)$. Take $\psi:=\pi_{\alpha} \rtimes (\id \otimes \lambda)$. Since $\psi$ is completely isometric on $\A\rtimes_{\alpha,r}G$, it will suffice to show that $\psi$ has the unique extension property with respect to $\A\rtimes_{\alpha,r}G$. To that end, let $\varphi: C^*_e(\A)\rtimes_{\alpha, r} G\rightarrow  \bB(\H \otimes L^2(G))$ be a completely contractive completely positive map such that $\varphi|_{\A\rtimes_{\alpha,r}G} = \psi|_{\A\rtimes_{\alpha,r}G}$. We will show that $\varphi = \psi$.
    
    Since $\A$ has a contractive approximate identity $(e_{\alpha})_{\alpha}$, we see that $\A\rtimes_{\alpha, r}G$ has a contractive approximate identity, which is the image of $(e_{\alpha}\cdot f_\lambda )_{\alpha,\lambda}$ under $\psi = \pi_{\alpha} \rtimes (\id \otimes \lambda)$ with $(f_\lambda)_\lambda\subseteq C_c(G)$ being a contractive approximate identity in $L^1(G)$. Thus, this contractive approximate identity in $\A \rtimes_{\alpha,r} G$ is also a contractive approximate identity for $C^*_e(\A)\rtimes_{\alpha, r}G$. Since $\varphi$ maps this contractive approximate identity an approximate identity, by \cite[Corollary 5.7]{lance1995hilbert} there is a unique unital completely positive map
    \[
        \Phi: M(C^*_e(\A)\rtimes_{\alpha, r}G)\rightarrow \bB(\H \otimes L^2(G))
    \]
    which is strictly continuous on the unit ball and extends $\varphi$ on $C^*_e(\A)\rtimes_{\alpha, r} G$. By \cite[Proposition 2.6.12]{blecher2004operator} there is a unital $*$-homomorphism $\Psi :M(C^*_e(\A)\rtimes_{\alpha, r}G)\rightarrow \bB(\H \otimes L^2(G)) $ which is strictly continuous on the unit ball and extends $\psi$ on $C^*_e(\A)\rtimes_{\alpha, r}G$. Now, since  $C^*_r(G)$, $C^*_e(\A)$, and $\A\rtimes_{\alpha, r}G$ have contractive approximate identities that are also approximate identities for $C^*_e(\A)\rtimes_{\alpha, r}G$, we also have that $M(C^*_e(\A)) \subseteq M(C^*_e(\A)\rtimes_{\alpha, r}G)$ and that $M(C^*_r(G))\subseteq M(\A\rtimes_{\alpha, r}G) \subseteq M(C^*_e(\A)\rtimes_{\alpha, r}G)$. Since $\phi$ coincides with $\psi$ on $\A\rtimes_{\alpha, r}G$, we must have that $\Phi$ coincides with $\Psi$ on ${M(\A\rtimes_{\alpha, r} G)}$ by strict continuity of $\Phi$ and $\Psi$ on the unit ball. In particular, since $M(C^*_r(G)) \subseteq M(\A \rtimes_{\alpha,r} G)$, we see that $\Phi$ must coincide with $\Psi$ on $M(C^*_r(G))$ pointwise, and so $\Phi$ must be multiplicative there.

    We claim that $\Phi$ must coincide with $\Psi$ on $\rC^*_e(\A)$. To this end, consider the completely contractive completely positive map $\theta : = \Phi|_{\rC^*_e(\A)}$, as well as the following unitary operator $U\in \bB(L^2(G; \H)) \cong \bB(\H) \overline{\otimes} \bB(L^2(G))$ given by $U(\xi)(g) = u_g(\xi(g))$. Observe that,   \begin{align*}
         (\Ad_U\circ\pi_\alpha)(t)(\xi)(g) & = (\Ad_{u_g}(\pi(\alpha_{g^{-1}}(t))))(\xi(g))\\
         & = (u_g \pi(\alpha_{g^{-1}}(t))u_g^*)(\xi(g))\\
         & = \pi(t)(\xi(g))\\
         & = (\pi(t)\otimes I)(\xi)(g)
    \end{align*}
    for every $t\in\rC^*_e(\A)$, $\xi\in L^2(G;\H)$, and $g\in G$. Thus, we see that $\Ad_U\circ \pi_\alpha = \pi\otimes\id$.
    
    For any normal state $\eta$ on $\B(L^2(G))$ we have the normal UCP slice map $\id \otimes \eta : \bB(\H) \overline{\otimes} \bB(L^2(G)) \rightarrow \bB(\H) \overline{\otimes} \mathbb{C}I_{L^2(G)}$ given by $(\id \otimes \eta)(T\otimes S) = \eta(S) \cdot T $. Since $\id\otimes\eta$ is the identity on $\bB(\H) \overline{\otimes} \mathbb{C}I_{L^2(G)}$, we must therefore have that
    \[
        (\id\otimes\eta)\circ (\pi\otimes\id) = \pi\otimes\id = \Ad_U \circ \pi_\alpha.
    \]
    In particular, we see that
    \[
        (\Ad_U^{-1}\circ(\id\otimes\eta)\circ\Ad_U\circ\theta)|_{\A} = \pi_\alpha|_{\A}.
    \]
    Since $\Ad_U^{-1}$ maps $\bB(\H) \overline{\otimes} \mathbb{C} I_{L^2(G)}$ into $\bB(\H) \overline{\otimes} L^{\infty}(G)$, we have that 
    $\Ad_U^{-1}\circ(\id\otimes\eta)\circ\Ad_U\circ\theta$ is an extension of $\pi_\alpha|_{\A}$ such that
    \[
        (\Ad_U^{-1}\circ(\id\otimes\eta)\circ\Ad_U\circ\theta)(\rC^*_e(\A)) \subseteq \bB(\H)\ol{\otimes} L^\infty(G).
    \]
    Thus, by Proposition \ref{p:tmax-action-pres} we have that $\Ad_U^{-1}\circ(\id\otimes\eta)\circ\Ad_U\circ\theta = \pi_\alpha$ on $\rC^*_e(\A)$. 
    
    We now show that every $a \in \A$ is in the multiplicative domain of $\theta$. If not, then without loss of generality $T := \theta(a^*a) - \pi_\alpha(a^*a)$ (or the same expression where we replace $a^*a$ with $aa^*$) is non-zero for some $a\in\A$. Then, there is a vector state $\eta: \bB(L^2(G))\rightarrow\bC$ such that $(\id\otimes\eta)( \Ad_U(T))$ is non-zero. However, by the above we have that
    \[
        (\Ad_U^{-1} \circ(\id\otimes\eta)\circ\Ad_U)(T) = (\Ad_U^{-1} \circ(\id\otimes\eta)\circ\Ad_U)(\theta(a^*a) - \pi_\alpha(a^*a))  = 0,
    \]
    which contradicts $(\id\otimes\eta)( \Ad_U(T)) \neq 0$. Thus, every $a\in \A$ is in the multiplicative domain of $\theta$. Whether $\rC^*_e(\A)$ is unital or not, $\theta$ is either unital or admits a unital completely positive extension to $\rC^*_e(\A)$, respectively. Thus, it follows from \cite[Theorem 3.18]{paulsen2002completely} that $\theta$ is multiplicative. Since $\theta$ is multiplicative and coincides with $\pi_{\alpha}$ on $\A$, it follows that $\theta = \pi_{\alpha}$.
    
    Finally, since $\Phi$ is strictly continuous on the unit ball, we see that $\Phi$ coincides with $\Psi$ on $M(C^*_e(\A))$, so that $\Phi$ is multiplicative on $M(C^*_e(\A))$. Since both $M(C^*_e(\A))$ and $M(C^*_r(G))$ belong to the multiplicative domain of $\Phi$, it follows that $\Phi$ coincides with $\Psi$ on $C^*_e(\A) \rtimes_{\alpha,r} G$, or, equivalently, that $\varphi=\psi$. Thus, $\psi$ has the unique extension property with respect to $\A \rtimes_{\alpha,r} G$, and the proof is concluded.
\end{proof}

Now that we are equipped with Theorem \ref{t:red-cross-iso}, we are at the stage where we can settle the Hao-Ng isomorphism problem for reduced crossed products. 

\begin{theorem}\label{t:hao-ng}
Let $G$ be a locally compact Hausdorff group, and let $X$ be a (non-degenerate) $\rC^*$-correspondence over a $\rC^*$-algebra $\B$. Suppose $\alpha:G\acts X$ is a generalized gauge action. Then, $\O_X\rtimes_{\alpha, r} G\cong \O_{X\rtimes_{\alpha, r} G}$.
\end{theorem}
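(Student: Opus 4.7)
The plan is to apply Theorem \ref{t:red-cross-iso} to the tensor algebra $\T_X^+$ and combine this with the known non-self-adjoint analogue of the Hao-Ng isomorphism. Since $\B$ sits inside $\T_X^+$ as a C*-subalgebra and $X$ is non-degenerate, an approximate identity for $\B$ is a self-adjoint contractive approximate identity for $\T_X^+$. Moreover, the generalized gauge action $\alpha : G \acts X$ extends to an action on $\O_X$ (and on $\T_X$) preserving both $\B$ and $X$, hence preserving the tensor algebra $\T_X^+$ it generates. Consequently, the restriction $\alpha : G \acts \T_X^+$ is a point-norm continuous action by completely isometric automorphisms, so $(\T_X^+, G, \alpha)$ forms a dynamical system to which Theorem \ref{t:red-cross-iso} applies.

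Using the identification $\rC^*_e(\T_X^+) \cong \O_X$ due to Katsoulis-Kribs (recalled in Subsection \ref{ss:correspondence}), Theorem \ref{t:red-cross-iso} yields a canonical $*$-isomorphism
\[ \O_X \rtimes_{\alpha, r} G \;=\; \rC^*_e(\T_X^+) \rtimes_{\alpha, r} G \;\cong\; \rC^*_e\bigl(\T_X^+ \rtimes_{\alpha, r} G\bigr). \]
Thus the result will follow as soon as we identify the reduced crossed product of the tensor algebra with the tensor algebra of the reduced crossed product C*-correspondence; that is, as soon as we establish a completely isometric isomorphism
\[ \T_X^+ \rtimes_{\alpha, r} G \;\cong\; \T_{X \rtimes_{\alpha, r} G}^+,\]
compatible with the canonical copies of $\B \rtimes_{\alpha, r} G$ and $X \rtimes_{\alpha, r} G$ inside each.

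For this identification, I would argue along the lines of the non-self-adjoint Hao-Ng theorem of Katsoulis-Ramsey \cite{katsoulis2019crossed, katsoulis2021non}, which holds for all locally compact Hausdorff groups since it does not require any commutation of C*-envelopes. The key observation is that fixing a completely isometric $\alpha$-admissible representation $\pi$ of $\T_X^+$ into $\bB(\H)$ with $\pi|_\B$ non-degenerate, the integrated form of $(\pi_\alpha, \id \otimes \lambda)$ restricts to a covariant pair of the subcorrespondence $X$ inside $\T_X^+$, yielding a rigged representation of $X \rtimes_{\alpha, r} G$ over $\B \rtimes_{\alpha, r} G$. The closure of $\rC_c(G, \B)$ and $\rC_c(G, X)$ inside $\T_X \rtimes_{\alpha, r} G$ defines the reduced crossed product correspondence (see \cite{bedos2015new}), and the closure of $\rC_c(G, \T_X^+)$ manifestly coincides with the tensor algebra generated by these copies, producing the desired completely isometric identification.

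Once this is in place, we take $\rC^*$-envelopes on both sides and invoke $\rC^*_e(\T_Y^+) \cong \O_Y$ for $Y := X \rtimes_{\alpha, r} G$ to conclude $\rC^*_e(\T_X^+ \rtimes_{\alpha, r} G) \cong \O_{X \rtimes_{\alpha, r} G}$, which combined with the earlier isomorphism completes the proof. The main obstacle — the commutation of the $\rC^*$-envelope with the reduced crossed product, which previously blocked the Katsoulis-Ramsey strategy outside the hyperrigid setting — has already been dispatched in Theorem \ref{t:red-cross-iso}; the remaining non-self-adjoint identification is essentially a bookkeeping exercise with the universal property of the tensor algebra and the fact that reduced crossed products are compatible with completely isometric $\alpha$-equivariant embeddings.
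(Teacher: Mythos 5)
Your proposal is correct and follows essentially the same route as the paper: verify that $(\T_X^+,G,\alpha)$ is a dynamical system with a self-adjoint contractive approximate identity (coming from $\B$ and non-degeneracy), apply Theorem \ref{t:red-cross-iso} together with the Katsoulis--Kribs identification $\rC^*_e(\T_X^+)\cong\O_X$, and then invoke the non-self-adjoint Hao--Ng isomorphism $\T_X^+\rtimes_{\alpha,r}G\cong\T^+_{X\rtimes_{\alpha,r}G}$ of Katsoulis--Ramsey to identify $\rC^*_e(\T_X^+\rtimes_{\alpha,r}G)$ with $\O_{X\rtimes_{\alpha,r}G}$. The only difference is cosmetic: the paper cites \cite[Theorem 3.7]{katsoulis2021non} for the last step, whereas you sketch its proof.
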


\begin{proof}
By \cite[Theorem 3.8]{katsoulis2006tensor}, Theorem \ref{t:red-cross-iso}, and \cite[Theorem 3.7]{katsoulis2021non}, we have that
\[\O_X \rtimes_{\alpha, r} G \cong  \rC^*_e(\T_X^+)\rtimes_{\alpha, r}G \cong 
\rC^*_e(\T_X^+ \rtimes_{\alpha, r}G) \cong \O_{X\rtimes_{\alpha, r} G}.\]
\end{proof}

Although the strategy presented in \cite{katsoulis2021non} is shown here to be successful in resolving the reduced Hao-Ng isomorphism in complete generality, the corresponding \emph{full} Hao-Ng isomorphism is more difficult. Indeed, it is known that the full Hao-Ng isomorphism is equivalent to the commutation of the $\rC^*$-envelope and the full crossed product for tensor algebras \cite[Theorem 4.9]{katsoulis2021non}. As before, it would suffice to prove an analogue of Theorem \ref{t:red-cross-iso} for full crossed products. Unfortunately, this fails for arbitrary operator algebras \cite[Theorem 5.7]{harris2019crossed}, while it is unknown whether it is valid for the class of tensor algebras. Thus, a more nuanced approach would be necessary to resolve the full Hao-Ng isomorphism in complete generality.

\vspace{6pt}

\textbf{Acknowledgments.}
The authors are grateful to Boyu Li for several fruitful discussions held at New Mexico State University and electronically over Zoom, and to Boris Bilich and Jamie Gabe for suggestions and comments on a previous version of the paper. The authors are especially grateful to Rapha\"el Clou\^atre for pointing out a missing argument in a preliminary version of the paper.

\bibliographystyle{plain}
\bibliography{bbl}

\end{document}